\newtheorem{theorem}{Theorem}[section]
\newtheorem{lemma}[theorem]{Lemma}
\newtheorem{proposition}[theorem]{Proposition}
\theoremstyle{definition}
\newtheorem{definition}[theorem]{Definition}
\begin{document}
\title{Length minima for an infinite family of filling closed curves on a one-holed torus} 

\author{Zhongzi Wang}
\address{School of Mathematical Sciences, Peking University, Beijing 100871, China}
\email{wangzz22@stu.pku.edu.cn}
\author{Ying Zhang}
\address{School of Mathematical Sciences, Soochow University, Suzhou 215006, China}
\email{yzhang@suda.edu.cn}
%\author{........}

\begin{abstract}
We explicitly find the minima as well as the minimum points of the geodesic length functions for the family of filling (hence non-simple) closed curves, $a^2b^n$ ($n\ge 3$), on a complete one-holed hyperbolic torus in its relative Teichm\"uller space, where  $a, b$ are simple closed curves on the one-holed torus which intersect exactly once transversely. This provides concrete examples for the problem to minimize the geodesic length of a fixed filling closed curve on a complete hyperbolic surface of finite type in its relative Teichm\"uller space.  
\end{abstract}

\date{}
\maketitle

%%%%%%%%%%%%%%%%%%%%%%%%%%%%%%%%%%%%%%%%%%%%%%%%%%%%%%%%%%%%%%
%%%%%%%%%%%%%%%%%%%%%%%%%%%%%%%%%%%%%%%%%%%%%%%%%%%%%%%%%%%%%%

\section{Introduction}

The length of any non-simple closed geodesic on an orientable, complete hyperbolic surface is known to have a positive lower bound (see Yamada \cite{yamada1982}, also Hempel \cite{hempel1984}). In \cite{basmajian1993} and \cite{basmajian2013} Basmajian obtained lower bounds for the length of a non-simple closed geodesic in terms of its self-intersection numbers. 
In \cite{Shen-Wang} Shen--Wang optimized the asymptotic multiplier in Basmajian's lower bound. In a slightly different direction, 
Erlandsson--Parlier \cite{erlandsson-parlier2020} initiated the study of the self-intersection number of the shortest non-simple closed geodesics with at least $k$ self-intersections. Advancing further from \cite{vo2022}, Basmajian--Parlier--Vo \cite{basmajian-parlier-vo2022arxiv} showed that, for $k\ge 10^{125}$, the shortest among all non-simple closed geodesics with at least $k$ self-intersections on any orientable, complete hyperbolic surface $S$ is uniquely realized by a corkscrew geodesic with exactly $k$ self-intersections on an ideal pair of pants (a corkscrew geodesics is that represented by word $ab^{k}$ in the fundamental group of the pair of pants where $a$ and $b$ are simple closed curves each surrounding a different cusp).

Let $S$ be an orientable surface of finite type equipped with a complete hyperbolic structure. 
Then $S$ has a finite number (possible none) of holes and cusps. 
Surrounding each hole of $S$ there is a unique simple closed geodesic which we call the neck geodesic of the hole. 
Though there is no simple closed geodesics surrounding a cusp, we shall still say that a cusp has a null neck geodesic of length zero at infinity. 

It is a well-known fact that a homotopically non-trivial, non-peripheral closed curve $c$ on a complete hyperbolic surface $S$ can be realized as a unique closed geodesic which might not be primitive (see, for example, Buser \cite[Theorem 1.6.6]{buser1992}); we denote its geodesic length by $L_c$.
By definition, the relative Teichm\"uller space of $S$ consists of all the marked complete hyperbolic structures on the same topological surface with all the lengths of the neck geodesics fixed.

We address the problem to minimize the geodesic length of a fixed filling (hence non-simple) closed curve on a finite type surface in the relative Teichm\"uller space of the surface. It will be shown in a future paper that such a geodesic length function has exactly one minimum point in the relative Teichm\"uller space. 

In this paper, we solve the problem for a concrete infinite family of filling closed curves, $a^2b^n$ ($n \ge 3$), on a complete one-holed hyperbolic torus $S_{1,1}$, where $a, b$ are simple closed curves on $S_{1,1}$ which intersect once transversely. 
Precisely, we find the minima and the corresponding minimum points for the geodesic lengths of the filling closed curves, $a^2b^n$ ($n \ge 3$), on $S_{1,1}$ in its relative Teichm\"uller space (see Theorems \ref{thm:a2bn} and \ref{rmk:a2bn}). 

Note that the closed curve $a^2b^n$ ($n \ge 3$) on $S_{1,1}$ has self-intersection number $n-1$ (see \cite[Proposition 6.1]{chas-phillips2010em}). Figure \ref{fig:a2b4} shows $a^2b^4$ drawn on a cut-open torus. 

%%%%%%%%%%%%%%%%%%%%%%%%%%%%%%%%%%%%%%%%%%%%%%%%%%%%%%%%%%%%%
\begin{figure}
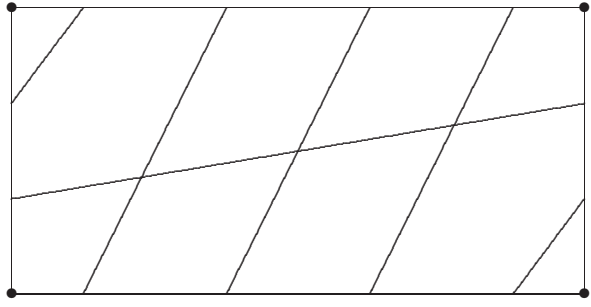

\begin{center}
\mbox{\beginpicture \setcoordinatesystem units <0.025in,0.025in>
\setplotarea x from -70 to 70 , y from 0 to 65
\plot -60 20 60 40 / 
\plot -60 40 -45 60 / 
\plot -45 0 -15 60 /
\plot -15 0 15 60 /
\plot 15 0 45 60 /
\plot 45 0 60 20 /
\linethickness=0.10pt
\putrule from -60 0.1 to 60 0.1
\putrule from 60 0 to 60 60
\putrule from -60 60.2 to 60 60.2
\putrule from -60 0 to -60 60
%\plot -60 0 60 0 60 60 -60 60 -60 0 /
%\putbar breadth <.5pt> from -60 0 to 60 0
%\putbar breadth <.5pt> from -60 0 to -60 60
%\putbar breadth <.5pt> from -60 60 to 60 60
%\putbar breadth <.5pt> from 60 0 to 60 60
\put {\mbox{$\bullet$}} [cc] <0mm,0mm> at -60 0  %
\put {\mbox{$\bullet$}} [cc] <0mm,0mm> at -60 60  %
\put {\mbox{$\bullet$}} [cc] <0mm,0mm> at 60 0  %
\put {\mbox{$\bullet$}} [cc] <0mm,0mm> at 60 60  %
\endpicture}
\end{center}
\caption{Curve $a^2b^4$ drawn on a cut-open torus (with bullet for the hole)} \label{fig:a2b4}
\end{figure}
%%%%%%%%%%%%%%%%%%%%%%%%%%%%%%%%%%%%%%%%%%%%%%%%%%%%%%%%%%%%%

The rest of the paper is organized as follows. In \S 2, for $X,Y\in {\rm SL}(2,\mathbb C)$ and $m\in\mathbb Z$, we derive nice expressions of ${\rm tr}(XY^{2m})$ and ${\rm tr}(XY^{2m+1})$ (see Proposition \ref{prop:trace-of-a2bn}), which implies symmetry of minimum points for the geodesic length function $L_{a^2b^n}$. In \S 3 we present some basic facts on the geometry of a complete one-holed hyperbolic torus and give a parametrization of the relative Teichm\"uller space. 
Finally, in \S 4 we explicitly find (see Theorem \ref{thm:a2bn}) the minima and the minimum points of the geodesic length functions $L_{a^2b^n}$ ($n \ge 3$) in the relative Teichm\"uller space of a complete one-holed hyperbolic torus $S_{1,1}$ (where $a, b$ are simple closed curves on  $S_{1,1}$ which intersect once transversely), and, furthermore, determine the asymptotic behaviours of the length minima as one of $n$ and the length of the neck geodesic tends to infinity while the other remains fixed (see Theorem \ref{rmk:a2bn}).

%%%%%%%%%%%%%%%%%%%%%%%%%%%%%%%%%%%%%%%%%%%%%%%%%%%%%%%%%%%%%%
%%%%%%%%%%%%%%%%%%%%%%%%%%%%%%%%%%%%%%%%%%%%%%%%%%%%%%%%%%%%%%

\section{Trace of $X^2Y^{n}$ for matrices $X$ and $Y$ in ${\rm SL}(2,\mathbb C)$}\label{s:calculation}
Let $X,Y \in {\rm SL}(2,\mathbb C)$ and $m\in\mathbb Z$. 
We shall derive (see Proposition \ref{prop:trace-of-a2bn}) expressions for ${\rm tr}\,(X^2Y^{2m})$ and ${\rm tr}\,(X^2Y^{2m+1})$ in terms of ${\rm tr}(X)$, ${\rm tr}(Y)$, ${\rm tr}(XY)$, ${\rm tr}(XY^{m})$ and ${\rm tr}(XY^{m\pm 1})$ which supply symmetry of minimum points of the geodesic length function $L_{a^2b^{n}}$. 

The following well-known Fricke--Klein trace identity for ${\rm SL}(2,\mathbb C)$-matrices will be used throughout this section: 
\begin{equation}\label{eqn:traceidFK}
%& & {\rm tr}(X^{-1}) = {\rm tr}(X); \\
{\rm tr}(XY) + {\rm tr}(XY^{-1}) = {\rm tr}(X)\,{\rm tr}(Y).
\end{equation}
It can be proved by induction that, for any word $w$ in letters $X, Y \in {\rm SL}(2,\mathbb C)$, its trace ${\rm tr}(w)$ is polynomial in ${\rm tr}(X)$,  ${\rm tr}(Y)$ and ${\rm tr}(XY)$. As an example, we have
\begin{equation}\label{eqn:trace-of-comm}
{\rm tr}(XYX^{-1}Y^{-1}) = {\rm tr}^2(X)+{\rm tr}^2(Y)+{\rm tr}^2(XY)-{\rm tr}(X)\,{\rm tr}(Y)\,{\rm tr}(XY)-2. 
\end{equation}
Proofs of the identities and facts just mentioned can be found in \cite[pp.619--621]{goldman2009irma}. 

\begin{lemma}\label{lem:traceidhe} 
For any word $w$ in letters $X,Y \in {\rm SL}(2,\mathbb C)$, we have
\begin{eqnarray}\label{eqn:hyperelliptic}
{\rm tr}(w(X,Y))={\rm tr}(w(X^{-1},Y^{-1})).
\end{eqnarray}
In particular, for $m \in \mathbb Z$, 
\begin{eqnarray}
{\rm tr}(XY^{m}X^{-1}Y^{-m-1})={\rm tr}(X^{-1}Y^{-m}XY^{m+1}).
\end{eqnarray}
\end{lemma}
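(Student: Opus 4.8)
The plan is to prove the general identity $\tr(w(X,Y)) = \tr(w(X^{-1},Y^{-1}))$ first and then extract the special case as a one-line corollary. The key observation is that there is a single matrix operation that simultaneously inverts both generators while preserving traces. Specifically, I would look for an element $J \in \GL(2,\CC)$ such that conjugation by $J$ sends $X \mapsto X^{-1}$ and $Y \mapsto Y^{-1}$ up to the trace-preserving symmetry of $\SL(2,\CC)$; more directly, I would exploit the classical fact that every $M \in \SL(2,\CC)$ satisfies $M^{-1} = \Lambda M^{\mathsf{T}} \Lambda^{-1}$, where $\Lambda = \left(\begin{smallmatrix} 0 & 1 \\ -1 & 0 \end{smallmatrix}\right)$, because $\det M = 1$ forces $M^{-1}$ to equal the adjugate $\left(\begin{smallmatrix} d & -b \\ -c & a \end{smallmatrix}\right)$, which is exactly $\Lambda M^{\mathsf{T}} \Lambda^{-1}$.

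With that identity in hand, the proof becomes essentially formal. First I would record that inversion on $\SL(2,\CC)$ is an anti-homomorphism: $(AB)^{-1} = B^{-1}A^{-1}$. Reading off the word $w(X,Y)$ as a product of letters, I would replace each letter by its inverse via $M^{-1} = \Lambda M^{\mathsf{T}} \Lambda^{-1}$, so that the whole word transforms as
\begin{equation*}
w(X^{-1},Y^{-1}) = w(\Lambda X^{\mathsf{T}}\Lambda^{-1}, \Lambda Y^{\mathsf{T}}\Lambda^{-1}) = \Lambda\, w(X^{\mathsf{T}}, Y^{\mathsf{T})}\, \Lambda^{-1},
\end{equation*}
where the adjacent $\Lambda^{-1}\Lambda$ factors telescope. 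Since conjugation by $\Lambda$ leaves the trace unchanged, I obtain $\tr(w(X^{-1},Y^{-1})) = \tr\bigl(w(X^{\mathsf{T}}, Y^{\mathsf{T}})\bigr)$. The second ingredient is that transposition is likewise an anti-homomorphism, $w(X^{\mathsf{T}}, Y^{\mathsf{T}}) = \bigl(\tilde w(X,Y)\bigr)^{\mathsf{T}}$ where $\tilde w$ is the reversed word, and transposition preserves the trace; hence $\tr(w(X^{\mathsf{T}},Y^{\mathsf{T}})) = \tr(\tilde w(X,Y)) = \tr(w(X,Y))$, the last equality because a trace is invariant under cyclic permutation and reversal is a cyclic rearrangement up to conjugation.

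The special case then follows by substitution: taking $w = XY^{m}X^{-1}Y^{-m-1}$ gives $w(X^{-1},Y^{-1}) = X^{-1}Y^{-m}XY^{m+1}$, which is precisely the right-hand side of the claimed equation, so the particular identity is immediate from the general one.

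The main obstacle I anticipate is purely bookkeeping rather than conceptual: one must be careful that the reversal-of-word argument genuinely reduces to a cyclic permutation inside the trace, since in general $\tr(\tilde w) = \tr(w)$ requires the reversed word to be conjugate (not merely equal) to the original. The cleanest way to sidestep this is to run the $M^{-1} = \Lambda M^{\mathsf{T}} \Lambda^{-1}$ computation and the transpose-reversal computation together in one pass, so that the reversals introduced by the anti-homomorphism property of inversion and of transposition cancel each other exactly, leaving $\tr(w(X,Y))$ with no residual reordering to justify. An alternative, which avoids transposes entirely, is to verify the identity directly on the free group of words by induction on word length, using \eqref{eqn:traceidFK} to handle the inversion of the final letter — but I expect the $\Lambda$-conjugation argument to be shorter and more transparent.
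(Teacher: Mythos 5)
Your reduction is correct as far as it goes: substituting $M^{-1}=\Lambda M^{\mathsf T}\Lambda^{-1}$ letter by letter gives $w(X^{-1},Y^{-1})=\Lambda\,(\tilde w(X,Y))^{\mathsf T}\,\Lambda^{-1}$, hence $\tr(w(X^{-1},Y^{-1}))=\tr(\tilde w(X,Y))$. But the last step, $\tr(\tilde w(X,Y))=\tr(w(X,Y))$, is a genuine gap. Your stated justification --- that reversal is a cyclic rearrangement up to conjugation --- is false: for $w=XYX^2Y^2$ the reversed word $Y^2X^2YX$ is cyclically reduced and is not a cyclic permutation of $w$, hence not conjugate to $w$ in the free group, so no amount of cyclic invariance of the trace closes the gap. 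Nor can the proposed ``one-pass'' cancellation work: the letter-by-letter substitution $Z_i\mapsto\Lambda Z_i^{\mathsf T}\Lambda^{-1}$ keeps the letters in their original order, so transposing the resulting product introduces exactly \emph{one} reversal and there is no second reversal to cancel it against. The reversal identity $\tr(\tilde w)=\tr(w)$ is in fact true for $\SL(2,\CC)$, but it is a theorem of the same depth as the lemma --- indeed it is equivalent to it, since $w(X^{-1},Y^{-1})=(\tilde w(X,Y))^{-1}$ and $\tr(M^{-1})=\tr(M)$ on $\SL(2,\CC)$ --- so invoking it without proof is circular.

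Two ways to repair this. The paper takes your ``alternative'' route: induction on the word length, conjugating $w$ so that its first letter $Z$ recurs, writing $w=Zw_1Zw_2$, and expanding $\tr(Z^{-1}w_1(X^{-1},Y^{-1})\,Z^{-1}w_2(X^{-1},Y^{-1}))$ with the Fricke--Klein identity \eqref{eqn:traceidFK} to reduce to shorter words; you would need to carry that induction out rather than mention it. Alternatively, your opening instinct --- a single trace-preserving conjugation inverting both generators --- can be realized directly without transposes: set $K:=XY-YX$; Cayley--Hamilton gives $XKX=K$ and $YKY=K$, so whenever $\det K\ne 0$ one has $KXK^{-1}=X^{-1}$ and $KYK^{-1}=Y^{-1}$, whence $w(X^{-1},Y^{-1})=K\,w(X,Y)\,K^{-1}$ and the traces agree; the degenerate locus $\det K=0$ is handled by continuity (or Zariski density). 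Either repair yields the lemma; the ``in particular'' case then follows by substitution exactly as you wrote.
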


\begin{proof}
We prove identity (\ref{eqn:hyperelliptic}) by induction on the word length of $w$. It is easy to verify that (\ref{eqn:hyperelliptic}) is true if the length of $w$ is at most four. Now suppose $w$ is of length at least $5$. We assume  by induction that (\ref{eqn:hyperelliptic}) is true for all words of length less than that of $w$. 

By conjugating $w$ by a subword, we may assume the first letter $Z \in \{X,Y,X^{-1},Y^{-1}\}$ of $w$ is the same as a later letter in $w$. Then $w=Zw_1Zw_2$, where $w_1, w_2$ are subwords of $w$ (one of $w_1$ and $w_2$ might be empty), and we have 
\begin{equation}
w(X^{-1},Y^{-1})=Z^{-1}w_1(X^{-1},Y^{-1})\,Z^{-1}w_2(X^{-1},Y^{-1}). 
\end{equation}
By Fricke--Klein trace identity (\ref{eqn:traceidFK}), we have
\begin{eqnarray*}
& & \hspace{-12pt}  {\rm tr}(w(X^{-1},Y^{-1}))  \\
& \!  =    & {\rm tr}\big(Z^{-1}w_1(X^{-1},Y^{-1})\big){\rm tr}\big(Z^{-1}w_2(X^{-1},Y^{-1})\big)-{\rm tr}\big(w_1(X^{-1},Y^{-1})\,w_2(X^{-1},Y^{-1})^{-1}\big) \\
& \!  =    & {\rm tr}\big(Zw_1(X,Y)\big)\,{\rm tr}\big(Zw_2(X,Y)\big)-{\rm tr}\big(w_1(X,Y)\,w_2(X,Y)^{-1}\big) \\
& \!  =    & {\rm tr}\big(Zw_1(X,Y)\big)\,{\rm tr}\big(Zw_2(X,Y)\big)-{\rm tr}\big(Zw_1(X,Y)\,(Zw_2(X,Y))^{-1}\big) \\
& \!  =    & {\rm tr}(Zw_1(X,Y)\,Zw_2(X,Y)),
\end{eqnarray*}
which is (\ref{eqn:hyperelliptic}), as desired. This proves Lemma \ref{lem:traceidhe}.
\end{proof}

We shall always write
\begin{eqnarray}
x={\rm tr}(X), \quad y={\rm tr}(Y), \quad z={\rm tr}(XY), \quad \mu={\rm tr}(XYX^{-1}Y^{-1})+2.
\end{eqnarray}
% and 
% \begin{equation}
% \mu={\rm tr}(XYX^{-1}Y^{-1})+2.
% \end{equation}
Then (\ref{eqn:trace-of-comm}) becomes
\begin{equation}
x^2+y^2+z^2-xyz=\mu,
\end{equation}
which can be rewritten as either of the following two identities:
\begin{eqnarray}
(xy-2z)^2 &=& x^2(y^2-4)-4(y^2-\mu), \label{eqn:(xy-z)^2} \\
xz(y-2) &=& (x-z)^2 + y^2-\mu. \label{eqn:xz(y-2)}
\end{eqnarray}

Let us write, for all $m \in \mathbb Z$, $x_m={\rm tr}(XY^{m})$. 
By (\ref{eqn:traceidFK}), we have
\begin{eqnarray} 
x_{m-1}+x_{m+1}=x_{m}y, 
\end{eqnarray}
and hence $x_{m}y-2x_{m+1}=x_{m-1}-x_{m+1}$. 

Note that $(XY^{m})Y(XY^{m})^{-1}Y^{-1}=XYX^{-1}Y^{-1}$. 
Applying (\ref{eqn:trace-of-comm}) gives
\begin{eqnarray}\label{eqn:xmyxm+1}
x_{m}^2+y^2+x_{m+1}^2-x_{m}yx_{m+1} = \mu.
\end{eqnarray}
By (\ref{eqn:(xy-z)^2}), we have  $(x_{m}y-2x_{m+1})^2=x_{m}^2(y^2-4)-4(y^2-\mu)$, and hence
\begin{eqnarray}\label{eqn:x_m^2}
x_{m}^2=\frac{(x_{m-1}-x_{m+1})^2+4(y^2-\mu)}{y^2-4}.
\end{eqnarray}
By (\ref{eqn:xz(y-2)}), we have $x_{m}x_{m+1}(y-2) = (x_{m}-x_{m+1})^2 + y^2-\mu$,
and hence
\begin{equation} \label{eqn:xmxm+1}
x_{m}x_{m+1} = \frac{(x_{m}-x_{m+1})^2 + y^2-\mu}{y-2}.
\end{equation}

Combining $x_{m-2}+x_{m}=yx_{m-1}$ and $x_{m-1}^2+y^2+x_{m}^2-x_{m-1}yx_{m} = \mu$ gives 
\begin{equation} 
x_{m-2}x_{m} - x_{m-1}^2 = y^2-\mu.
\end{equation}

\begin{definition}\label{def:P_n}
For each $n \in \mathbb Z$, we define a polynomial $P_n$ in $x,y,z$ as follows:

1) If $n=2m$ is even, define
\begin{eqnarray}
P_{2m}=-\,{\rm tr}(XY^{m}X^{-1}Y^{-m});
\end{eqnarray}

2) If $n=2m+1$ is odd, define
\begin{eqnarray}
P_{2m+1}=-\,{\rm tr}(XY^{m}X^{-1}Y^{-m-1})=-\,{\rm tr}(XY^{m+1}X^{-1}Y^{-m}).
\end{eqnarray}
In particular, we have $P_{0}=-2$ and $P_{1}=-y$.
\end{definition}

The polynomials $P_n$ can be characterized by recursive relations as follows.

\begin{proposition}\label{prop:recursive}
The polynomials $P_n$,  $n \in \mathbb Z$ in $x,y,z$ satisfy recursive relations
\begin{eqnarray}
P_{2m}  &=&  yP_{2m-1}-P_{2m-2}+y^2-\mu, \label{eqn:recursive-even}\\
 P_{2m+1}  &=&  yP_{2m}-P_{2m-1}, \label{eqn:recursive-odd}
\end{eqnarray}
with $P_{0}=-2$ and $P_{1}=-y$.
\end{proposition}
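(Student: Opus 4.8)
The plan is to deduce both recursions from the matrix Cayley--Hamilton identity $Y^{k+1}=yY^{k}-Y^{k-1}$ (valid for all $k\in\mathbb Z$ because $\det Y=1$ forces $Y+Y^{-1}=yI$), combined with the two equivalent expressions for the odd polynomials recorded in Definition \ref{def:P_n}. The base values $P_{0}=-2$ and $P_{1}=-y$ are immediate from the definition.

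First I would prove the odd relation (\ref{eqn:recursive-odd}). Starting from the second expression $P_{2m+1}=-{\rm tr}(XY^{m+1}X^{-1}Y^{-m})$ and substituting $Y^{m+1}=yY^{m}-Y^{m-1}$ gives ${\rm tr}(XY^{m+1}X^{-1}Y^{-m})=y\,{\rm tr}(XY^{m}X^{-1}Y^{-m})-{\rm tr}(XY^{m-1}X^{-1}Y^{-m})$. Here the first trace on the right is $-P_{2m}$ by the even case of the definition, while the second is $-P_{2m-1}$ by the \emph{first} expression for the odd polynomials with $m$ replaced by $m-1$. Hence $P_{2m+1}=yP_{2m}-P_{2m-1}$; note that no inhomogeneous term arises.

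For the even relation (\ref{eqn:recursive-even}) the same move applied to $P_{2m}=-{\rm tr}(XY^{m}X^{-1}Y^{-m})$ via $Y^{m}=yY^{m-1}-Y^{m-2}$ yields, again using the first expression for $P_{2m-1}$, the reduction $P_{2m}=yP_{2m-1}+{\rm tr}(XY^{m-2}X^{-1}Y^{-m})$. Everything then comes down to the single key identity
\begin{equation*}
{\rm tr}(XY^{m-2}X^{-1}Y^{-m})={\rm tr}(XY^{m-1}X^{-1}Y^{-(m-1)})+y^{2}-\mu,
\end{equation*}
whose right-hand side is $-P_{2m-2}+y^{2}-\mu$. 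I expect this identity to be the main obstacle: the homogeneous Cayley--Hamilton manipulation alone only produces lower-order $P_{k}$'s and cannot by itself generate the additive constant $y^{2}-\mu$, which is exactly the commutator contribution.

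To establish it I would compute the two-parameter trace $f(p,q):={\rm tr}(XY^{p}X^{-1}Y^{q})$ in closed form. Diagonalizing $Y={\rm diag}(\lambda,\lambda^{-1})$ and writing $X=\begin{pmatrix}a&b\\c&d\end{pmatrix}$, a direct multiplication gives $f(p,q)=ad\,{\rm tr}(Y^{p+q})-bc\,{\rm tr}(Y^{q-p})$; solving $a+d=x$, $a\lambda+d\lambda^{-1}=z$ and using $ad-bc=1$ together with $x^{2}+y^{2}+z^{2}-xyz=\mu$ gives $ad=(y^{2}-\mu)/(y^{2}-4)$ and $bc=(4-\mu)/(y^{2}-4)$, so that
\begin{equation*}
f(p,q)=\frac{(y^{2}-\mu)\,{\rm tr}(Y^{p+q})-(4-\mu)\,{\rm tr}(Y^{q-p})}{y^{2}-4}.
\end{equation*}
Both $f(m-2,-m)$ and $f(m-1,-(m-1))$ share the same index-difference $q-p=-(2m-2)$, so their difference comes solely from the first term, where $p+q$ changes from $-2$ to $0$; this contributes $(y^{2}-\mu)\big({\rm tr}(Y^{2})-{\rm tr}(Y^{0})\big)/(y^{2}-4)=(y^{2}-\mu)(y^{2}-4)/(y^{2}-4)=y^{2}-\mu$, which is precisely the key identity and hence (\ref{eqn:recursive-even}). (Alternatively one could avoid the closed form and prove the key identity by a short induction on $m$ using the Fricke--Klein identity (\ref{eqn:traceidFK}), but the explicit formula for $f(p,q)$ makes the origin of the term $y^{2}-\mu$ most transparent.)
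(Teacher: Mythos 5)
Your proof is correct. The reduction step is in substance the same as the paper's: substituting the Cayley--Hamilton relation $Y^{k+1}=yY^{k}-Y^{k-1}$ inside the trace is exactly the Fricke--Klein identity (\ref{eqn:traceidFK}) applied with second factor $Y^{\pm1}$, which is how the paper peels off one power of $Y$ to get $P_{2m+1}=yP_{2m}-P_{2m-1}$ and $P_{2m}=yP_{2m-1}+{\rm tr}(XY^{m}X^{-1}Y^{-m+2})$ (your ${\rm tr}(XY^{m-2}X^{-1}Y^{-m})$ equals this by Lemma \ref{lem:traceidhe} and cyclicity). Where you genuinely diverge is in establishing the key identity that produces the inhomogeneous term $y^{2}-\mu$: the paper splits ${\rm tr}(XY^{m}\cdot X^{-1}Y^{-m+2})$ and ${\rm tr}(XY^{m-1}\cdot X^{-1}Y^{-m+1})$ via (\ref{eqn:traceidFK}) and invokes the previously derived relation $x_{m-2}x_{m}-x_{m-1}^{2}=y^{2}-\mu$, whereas you compute the closed form $f(p,q)=\big[(y^{2}-\mu)\,{\rm tr}(Y^{p+q})-(4-\mu)\,{\rm tr}(Y^{q-p})\big]/(y^{2}-4)$ by diagonalizing $Y$. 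Your formula is more informative (it exposes at a glance why only the $p+q$ slot matters and where $y^{2}-\mu$ comes from, and it would reprove Definition \ref{def:P_n}'s well-definedness and Proposition \ref{prop:P-P} for free), at the modest cost of assuming $Y$ diagonalizable and $y^{2}\neq 4$; you should add the one-line remark that the assertions are polynomial identities in the matrix entries, so verifying them on the Zariski-dense locus $y\neq\pm2$ suffices. With that remark included, the argument is complete.
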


\begin{proof}
It is easy to see that 
\begin{eqnarray}
&  & P_{0}=-\,{\rm tr}(XY^{0}X^{-1}Y^{0})=-2,  \\
&  & P_{1}=-\,{\rm tr}(XY^{0}X^{-1}Y^{-1})=-y.
\end{eqnarray}
We first prove (\ref{eqn:recursive-even}). As
\begin{eqnarray}
P_{2m}  &=&  -\,{\rm tr}(XY^{m}X^{-1}Y^{-m+1}Y^{-1})  \nonumber  \\
  &=&  -\,{\rm tr}(XY^{m}X^{-1}Y^{-m+1})\,{\rm tr}(Y^{-1}) + {\rm tr}(XY^{m}X^{-1}Y^{-m+2}) \nonumber  \\
 &=&  y P_{2m-1}  +  {\rm tr}(XY^{m}X^{-1}Y^{-m+2}),
\end{eqnarray}
it remains to show that
\begin{eqnarray}\label{eqn:remains-to-show}
{\rm tr}(XY^{m}X^{-1}Y^{-m+2}) - {\rm tr}(XY^{m-1}X^{-1}Y^{-m+1}) = y^2-\mu. 
\end{eqnarray}
By Fricke--Klein trace identity (\ref{eqn:traceidFK}), we have
\begin{eqnarray}
{\rm tr}(XY^{m}X^{-1}Y^{-m+2}) &   =   & {\rm tr}(XY^{m})\,{\rm tr}(X^{-1}Y^{-m+2})- {\rm tr}(XY^{2m-2}X),  \\
{\rm tr}(XY^{m-1}X^{-1}Y^{-m+1})  &   =   & {\rm tr}(XY^{m-1})\,{\rm tr}(X^{-1}Y^{-m+1})- {\rm tr}(XY^{2m-2}X). 
\end{eqnarray}
Hence
\begin{eqnarray}
{\rm tr}(XY^{m}X^{-1}Y^{-m+2}) - {\rm tr}(XY^{m-1}X^{-1}Y^{-m+1})  = x_{m}x_{m-2}-x_{m-1}^2 = y^2 - \mu.
\end{eqnarray}
This proves (\ref{eqn:recursive-even}). For (\ref{eqn:recursive-odd}), we have
\begin{eqnarray}
P_{2m+1}  &=&  -\,{\rm tr}(XY^{m}X^{-1}Y^{-m}Y^{-1}) \nonumber \\
  &=&  -\,{\rm tr}(XY^{m}X^{-1}Y^{-m})\,{\rm tr}(Y^{-1}) + {\rm tr}(XY^{m}X^{-1}Y^{-m+1}) \nonumber \\
 &=&  y P_{2m}  -  P_{2m-1}.
\end{eqnarray}
This finishes the proof of Proposition \ref{prop:recursive}.
\end{proof}

It follows from Proposition \ref{prop:recursive} that $P_n$ can be rewritten as a polynomial in $y$ and $\mu$.

\begin{proposition}\label{prop:P-P}
There exist univariate polynomials $q_n$, $n \ge 2$ %% with positive integer coefficients 
such that
\begin{eqnarray}
P_{2m}-P_{0}  &=&  (4-\mu)\,q_{2m}(y-2),  \label{eqn:P-P 2m} \\
P_{2m+1}-P_{1}  &=&  (4-\mu)\,q_{2m+1}(y-2). \label{eqn:P-P 2m+1} 
\end{eqnarray}
Precisely, $q_2(t)=1$, $q_3(t)=t+2$, and $q_{n}$ satisfies the following recursive relations:
\begin{eqnarray}\label{eqn:Qrecursive}
q_{2m}(t)  &=&  (t+2)\,q_{2m-1}(t)-q_{2m-2}(t)+1, \\
q_{2m+1}(t)  &=&  (t+2)\,q_{2m}(t)-q_{2m-1}(t).
\end{eqnarray}
 Furthermore, the coefficients of $q_n$ for $n \ge 2$ are all positive integers.
\end{proposition}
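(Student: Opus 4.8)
The plan is to separate the two assertions: the existence of the $q_n$ together with their recursions follows directly from Proposition~\ref{prop:recursive}, while the positivity of the coefficients requires a separate, more careful induction.

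First I would prove identities (\ref{eqn:P-P 2m}), (\ref{eqn:P-P 2m+1}) and the recursions (\ref{eqn:Qrecursive}) simultaneously by induction on $n$. After checking the base cases $q_2(t)=1$, $q_3(t)=t+2$ directly from $P_0=-2$, $P_1=-y$ and Proposition~\ref{prop:recursive}, I would assume the representations hold for indices $2m-2$ and $2m-1$ and substitute $P_{2m-1}=-y+(4-\mu)\,q_{2m-1}(y-2)$ and $P_{2m-2}=-2+(4-\mu)\,q_{2m-2}(y-2)$ into the recursion (\ref{eqn:recursive-even}). The key observation is that the terms \emph{not} carrying the factor $(4-\mu)$ collapse to a single copy of $4-\mu$, so that $4-\mu$ factors out of the entire right-hand side; writing $y=t+2$ then yields exactly $q_{2m}(t)=(t+2)\,q_{2m-1}(t)-q_{2m-2}(t)+1$. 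The odd case, using (\ref{eqn:recursive-odd}), is entirely analogous (there the constant terms cancel outright) and produces $q_{2m+1}(t)=(t+2)\,q_{2m}(t)-q_{2m-1}(t)$. This establishes the existence of the univariate polynomials $q_n$ and their recursions; integrality of the coefficients is immediate since every operation appearing has integer coefficients.

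The main obstacle is the positivity claim, because the recursions (\ref{eqn:Qrecursive}) contain the subtractions $-q_{2m-2}$ and $-q_{2m-1}$, so a naive induction cannot guarantee that all coefficients stay positive. To circumvent this I would pass to the consecutive differences $d_n(t):=q_n(t)-q_{n-1}(t)$ for $n\ge 3$. From (\ref{eqn:Qrecursive}) one has $d_{2m}=(t+1)q_{2m-1}-q_{2m-2}+1$ and $d_{2m+1}=(t+1)q_{2m}-q_{2m-1}$; substituting $q_{2m-1}=q_{2m-2}+d_{2m-1}$ and $q_{2m}=q_{2m-1}+d_{2m}$ respectively, the troublesome subtractions get absorbed and one obtains the subtraction-free recursions
\[
d_{2m}(t)=t\,q_{2m-2}(t)+(t+1)\,d_{2m-1}(t)+1,\qquad d_{2m+1}(t)=t\,q_{2m-1}(t)+(t+1)\,d_{2m}(t).
\]

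Finally I would run a single interleaved induction establishing the two statements ``$q_n$ has positive integer coefficients'' and ``$d_n$ has positive integer coefficients'' together. The base cases $q_2=1$, $q_3=t+2$ and $d_3=t+1$ are clear. At each step the displayed recursions express $d_n$ as a sum of products of the previously controlled positive polynomials $q_{n-2}$ and $d_{n-1}$ (respectively $q_{n-1}$ and $d_n$) with the positive factors $t$ and $t+1$, plus possibly $1$, so $d_n$ has positive integer coefficients; then $q_n=q_{n-1}+d_n$ inherits positive integer coefficients from $q_{n-1}$ and $d_n$, closing the induction. The only genuine content here is the observation that the difference polynomials satisfy subtraction-free recursions; everything else is bookkeeping.
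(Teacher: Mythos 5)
Your proposal is correct and follows essentially the same route as the paper: substitute the inductive representations of $P_{2m-1},P_{2m-2}$ (resp.\ $P_{2m},P_{2m-1}$) into the recursions of Proposition~\ref{prop:recursive} to factor out $4-\mu$ and read off the $q_n$-recursions, and then prove positivity by showing the consecutive differences $q_n-q_{n-1}$ have positive integer coefficients. The only difference is one of detail: the paper merely asserts that the positivity of the differences ``is easily shown by induction,'' whereas you actually exhibit the subtraction-free recursions $d_{2m}=t\,q_{2m-2}+(t+1)\,d_{2m-1}+1$ and $d_{2m+1}=t\,q_{2m-1}+(t+1)\,d_{2m}$ that make that induction work.
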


\begin{proof} Applying Proposition \ref{prop:recursive} and by induction, we have
\begin{eqnarray*}
P_{2m} - P_{0} 
 &=&   y(P_{2m-1}-P_{1})-(P_{2m-2}- P_{0})+4-\mu \\
 &=&   y(4-\mu)q_{2m-1}(y-2)-(4-\mu)q_{2m-2}(y-2)+4-\mu \\
 &=&   (4-\mu) \big[ yq_{2m-1}(y-2)-q_{2m-2}(y-2)+1 \big], 
\end{eqnarray*}
and 
\begin{eqnarray*}
P_{2m+1} - P_{1} 
 &=&   y(P_{2m}-P_{0})-(P_{2m-1}- P_{1}) \\
 &=&   y(4-\mu)q_{2m}(y-2)-(4-\mu)q_{2m-1}(y-2) \\
 &=&   (4-\mu) \big[ yq_{2m}(y-2)-q_{2m-1}(y-2) \big], 
\end{eqnarray*}
This proves all the formulas in Proposition \ref{prop:P-P}.

It remains to prove that the coefficients of $q_n$ for $n \ge 2$ are positive integers.
Actually, it is easily shown by induction that, for $n \ge 2$, the coefficients of $q_n-q_{n-1}$ are all positive integers. 
This finishes the proof of Proposition \ref{prop:P-P}.
\end{proof}

Now we are in a position to obtain expressions for ${\rm tr}(X^2Y^{n})$, $n\ge 3$, 
which will be used in \S \ref{s:minima} to minimize the geodesic length of $a^2b^n$.

\begin{proposition}\label{prop:trace-of-a2bn}
For $m \in \mathbb Z$, we have
\begin{eqnarray}
{\rm tr}(X^2Y^{2m})   &=&     \frac{(x_{m-1}-x_{m+1})^2}{y^2-4}+2+(4-\mu)\big[ 4(y^2-4)^{-1}+q_{2m}(y-2)],   
\label{eqn:a2b2m} \\
{\rm tr}(X^2Y^{2m+1})   &=&    \frac{(x_m-x_{m+1})^2}{y-2}+2+(4-\mu)\big[ (y-2)^{-1}+q_{2m+1}(y-2)]. 
\label{eqn:a2b2m+1}
\end{eqnarray}
\end{proposition}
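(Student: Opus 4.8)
The plan is to reduce each of the two traces to a sum of the form $x_m^2+P_{2m}$ or $x_mx_{m+1}+P_{2m+1}$, where $x_m=\tr(XY^m)$, and then to substitute the closed forms already available for both ingredients. The crucial point is that a single, well-chosen application of the Fricke--Klein identity (\ref{eqn:traceidFK}) produces the term $\tr(X^2Y^n)$ together with the relevant commutator word defining $P_n$ at the same time.

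For the even case I would apply (\ref{eqn:traceidFK}) to the pair $A=XY^m$ and $B=Y^mX$. Since $AB=XY^{2m}X$ has the same trace as $X^2Y^{2m}$ by cyclic invariance, $AB^{-1}=XY^mX^{-1}Y^{-m}$, and $\tr(A)=\tr(B)=x_m$, this gives at once
\[
\tr(X^2Y^{2m})=x_m^2-\tr(XY^mX^{-1}Y^{-m})=x_m^2+P_{2m}.
\]
The odd case is identical with $A=XY^m$, $B=Y^{m+1}X$, for which $AB=XY^{2m+1}X$, $AB^{-1}=XY^mX^{-1}Y^{-m-1}$, and $\tr(B)=x_{m+1}$, yielding
\[
\tr(X^2Y^{2m+1})=x_mx_{m+1}+P_{2m+1}.
\]

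What remains is pure substitution. Using Proposition \ref{prop:P-P} with $P_0=-2$ and $P_1=-y$ I would write $P_{2m}=-2+(4-\mu)q_{2m}(y-2)$ and $P_{2m+1}=-y+(4-\mu)q_{2m+1}(y-2)$, and then replace $x_m^2$ by (\ref{eqn:x_m^2}) and $x_mx_{m+1}$ by (\ref{eqn:xmxm+1}). The $q$-terms are already in the desired shape, so the only thing to check is that the leftover constant (i.e.\ $x_{m\pm1}$-free) pieces match. These collapse because the $\mu$-bearing fractions combine into pure constants, namely $\frac{4(y^2-\mu)}{y^2-4}-\frac{4(4-\mu)}{y^2-4}=4$ in the even case and $\frac{y^2-\mu}{y-2}-\frac{4-\mu}{y-2}=y+2$ in the odd case; feeding these back, together with the additive $-2$ (resp.\ $-y$) coming from $P_0$ (resp.\ $P_1$), reproduces the additive $+2$ and the isolated $(4-\mu)$-fractions displayed on the right-hand sides of (\ref{eqn:a2b2m}) and (\ref{eqn:a2b2m+1}).

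The only genuinely non-mechanical step is the first one: recognizing that the factorizations $XY^{2m}X=(XY^m)(Y^mX)$ and $XY^{2m+1}X=(XY^m)(Y^{m+1}X)$ are precisely what turn Fricke--Klein into a bridge between the target trace and the commutator word defining $P_n$. Everything afterwards is bookkeeping, and I anticipate no real obstacle beyond carefully tracking the denominators $y^2-4$ and $y-2$ while clearing the constant terms.
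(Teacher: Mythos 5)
Your proposal is correct and follows essentially the same route as the paper: the paper likewise writes $\tr(X^2Y^{2m})=\tr\big((XY^m)(Y^mX)\big)=x_m^2+P_{2m}$ and $\tr(X^2Y^{2m+1})=\tr\big((XY^m)(Y^{m+1}X)\big)=x_mx_{m+1}+P_{2m+1}$ via the Fricke--Klein identity, then substitutes (\ref{eqn:x_m^2}), (\ref{eqn:xmxm+1}) and Proposition \ref{prop:P-P}. Your explicit verification of the constant terms ($4$ and $y+2$) is a correct spelling-out of the bookkeeping the paper leaves implicit.
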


\begin{proof} We apply (\ref{eqn:traceidFK}), (\ref{eqn:x_m^2}), (\ref{eqn:xmxm+1}), (\ref{eqn:P-P 2m}) and (\ref{eqn:P-P 2m+1}) 
to obtain (\ref{eqn:a2b2m}) and (\ref{eqn:a2b2m+1}) as follows:
\begin{eqnarray*}
{\rm tr}(X^2Y^{2m}) 
  &=&    {\rm tr}(XY^{m}Y^{m}X)   \\
  &=&    {\rm tr}(XY^{m})\,{\rm tr}(Y^{m}X) - {\rm tr}(XY^{m}X^{-1}Y^{-m}) \\
  &=&    x_{m}^2 + P_{2m} \\
  &=&     \frac{(x_{m-1}-x_{m+1})^2}{y^2-4}+2+(4-\mu)\big[ 4(y^2-4)^{-1}+q_{2m}(y-2)], \\
{\rm tr}(X^2Y^{2m+1}) 
  &=&    {\rm tr}(XY^{m}Y^{m+1}X)   \\
  &=&    {\rm tr}(XY^{m})\,{\rm tr}(Y^{m+1}X) - {\rm tr}(XY^{m}X^{-1}Y^{-m-1}) \\
  &=&    x_{m}x_{m+1} + P_{2m+1} \\
  &=&    \frac{(x_m-x_{m+1})^2}{y-2}+2+(4-\mu)\big[ (y-2)^{-1}+q_{2m+1}(y-2)]. 
\end{eqnarray*}
This proves Proposition \ref{prop:trace-of-a2bn}. 
\end{proof}

%%%%%%%%%%%%%%%%%%%%%%%%%%%%%%%%%%%%%%%%%%%%%%%%%%%%%%%%%%%%%%
%%%%%%%%%%%%%%%%%%%%%%%%%%%%%%%%%%%%%%%%%%%%%%%%%%%%%%%%%%%%%%

%%%%%%%%%%%%%%%%%%%%%%%%%%%%%%%%%%%%%%%%%%%%%%%%%%%%%%%%%%%%%%
%%%%%%%%%%%%%%%%%%%%%%%%%%%%%%%%%%%%%%%%%%%%%%%%%%%%%%%%%%%%%%

\section{Geometry of a complete one-holed hyperbolic torus} 
Let $S_{1,1}$ be a one-holed torus, and let $a,b$ be two simple closed curves on $S_{1,1}$ which intersect once transversely.
The commutator $aba^{-1}b^{-1}$ then represents the simple closed curve $\partial$ surrounding the hole. 

We shall use the upper-half complex plane model $H^2$ of the hyperbolic plane. 
An orientation-preserving isometry of  $H^2$ is then given by a fractional linear transformation mapping the upper-half complex plane onto itself. 
As a consequence, the group of orientation-preserving isometries of  $H^2$ coincides with ${\rm PSL}(2, \mathbb R)$.

A complete hyperbolic structure on $S_{1,1}$ gives rise to a holonomy representation 
\begin{equation}
\eta: \pi_1(S_{1,1}) \rightarrow {\rm PSL}(2, \mathbb R), 
\end{equation}
which is unique up to simultaneous conjugation by elements in ${\rm PSL}(2, \mathbb R)$. 

As the axes of $\eta(a)$ and $\eta(b)$ cross each other, a lifted representation
\begin{equation}
\rho: \pi_1(S_{1,1}) \rightarrow {\rm SL}(2, \mathbb R)
\end{equation}
of $\eta$ can be chosen so that ${\rm tr}\,\rho(a)>2$, ${\rm tr}\,\rho(b)>2$, ${\rm tr}\,\rho(ab)>2$.  
We write
\begin{equation}
x={\rm tr}\,\rho(a), \quad y={\rm tr}\,\rho(b), \quad z={\rm tr}\,\rho(ab).
\end{equation}
 Then
\begin{equation}
x=2\cosh\frac{L_{a}}{2}, \quad y=2\cosh\frac{L_{b}}{2}, \quad z=2\cosh\frac{L_{ab}}{2}.
\end{equation}

Using the fact that the axes of $\eta(a)$ and $\eta(b)$ cross each other, direct calculations show that
${\rm tr}\,\rho(aba^{-1}b^{-1}) <2$, and hence ${\rm tr}\,\rho (aba^{-1}b^{-1}) \le -2$.
It follows that 
\begin{equation}\label{eqn:necktraceisneg}
{\rm tr}\,\rho (aba^{-1}b^{-1}) = -2\cosh\frac{L_{\partial}}{2}. 
\end{equation}

On the other hand, by (\ref{eqn:trace-of-comm}), we have ${\rm tr}\,\rho (aba^{-1}b^{-1})=x^2+y^2+z^2-xyz-2$. 
Write $\mu= 2+{\rm tr}\,\rho (aba^{-1}b^{-1})$. Then $\mu=2-2\cosh\frac{L_{\partial}}{2} \le 0$ and 
\begin{equation}
x^2+y^2+z^2-xyz = \mu.
\end{equation}

The relative Teichm\"{u}ller space ${\rm Teich}_{1,1}(L_{\partial})$ of all the marked complete hyperbolic structures on $S_{1,1}$ with fixed length $L_{\partial} \ge 0$ of neck geodesic is then parametrized as
\begin{eqnarray}
{\rm Teich}^{\rm length}_{1,1}(L_{\partial}) = \{ (L_{a},L_{b},L_{ab})\in (\mathbb R_{>0})^3 \mid x^2+y^2+z^2-xyz=\mu \},
\end{eqnarray}
where $x={\rm tr}\,\rho(a)$, $y={\rm tr}\,\rho(b)$, $z={\rm tr}\,\rho(ab)$ and $\mu= 2-2\cosh\frac{L_{\partial}}{2} \le 0$.

The simple closed geodesics representing the simple closed curves $a$, $b$ and $ab$ intersect pairwise at the so-called {\it Weierstrass points}  and thus form a hyperbolic triangle (called the {\it Weierstrass triangle}) of side-lengths $\frac{1}{2}L_{a}$, $\frac{1}{2}L_{b}$ and $\frac{1}{2}L_{ab}$.

The following proposition concerns the geometric nature of $L_{\partial}$. 

\begin{proposition}\label{propn:sineplus}
Let $S_{1,1}$ be a complete one-holed hyperbolic torus and let $a,b$ be simple closed geodesics on $S_{1,1}$ which intersect once transversely. Then
\begin{equation}\label{eqn:parea1} 
 \sinh\frac{L_a}{2}\sinh\frac{L_b}{2}\sin\theta = \cosh\frac{L_{\partial}}{4}, 
\end{equation}
where $\theta \in (0,\pi)$ is the interior angle of the associated Weierstrass triangle contained by the two sides corresponding to $a$ and $b$.
The constraint (\ref{eqn:parea1}) is equivalent to
\begin{equation}
 \sinh H_b \sinh\frac{L_b}{2} = \cosh\frac{L_{\partial}}{4}, \label{eqn:parea2}
\end{equation}
where $H_b$ is the length of the altitude perpendicular to the side of length $\frac{1}{2}L_{b}$.
\end{proposition}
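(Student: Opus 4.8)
The plan is to read both identities off the hyperbolic geometry of the Weierstrass triangle, whose sides have lengths $\frac{1}{2}L_a$, $\frac{1}{2}L_b$, $\frac{1}{2}L_{ab}$ and whose interior angle between the first two is $\theta$. First I would apply the hyperbolic law of cosines to $\theta$, which sits opposite the side of length $\frac{1}{2}L_{ab}$:
\[
\cosh\frac{L_{ab}}{2} = \cosh\frac{L_a}{2}\cosh\frac{L_b}{2} - \sinh\frac{L_a}{2}\sinh\frac{L_b}{2}\cos\theta.
\]
Substituting $x=2\cosh\frac{L_a}{2}$, $y=2\cosh\frac{L_b}{2}$, $z=2\cosh\frac{L_{ab}}{2}$ and recording that $\sinh\frac{L_a}{2}\sinh\frac{L_b}{2}=\frac{1}{4}\sqrt{(x^2-4)(y^2-4)}$, this rearranges to
\[
\cos\theta = \frac{xy-2z}{\sqrt{(x^2-4)(y^2-4)}}.
\]

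The key algebraic step is then to form $\sin^2\theta = 1-\cos^2\theta$ and simplify the numerator $(x^2-4)(y^2-4)-(xy-2z)^2$. Here I would invoke identity (\ref{eqn:(xy-z)^2}), which holds verbatim under the defining constraint $x^2+y^2+z^2-xyz=\mu$ of the relative Teichm\"uller space; expanding and cancelling collapses the expression to the clean value $4(4-\mu)$. Since $\mu = 2-2\cosh\frac{L_\partial}{2}$, the half-angle identity $1+\cosh t = 2\cosh^2\frac{t}{2}$ converts $4-\mu$ into $4\cosh^2\frac{L_\partial}{4}$, whence
\[
\sin^2\theta = \frac{16\cosh^2\frac{L_\partial}{4}}{(x^2-4)(y^2-4)}.
\]
Taking the positive square root, legitimate because $\theta\in(0,\pi)$ and $\cosh\frac{L_\partial}{4}>0$, and multiplying through by $\sinh\frac{L_a}{2}\sinh\frac{L_b}{2}=\frac{1}{4}\sqrt{(x^2-4)(y^2-4)}$ makes the radicals cancel and yields (\ref{eqn:parea1}) exactly.

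For the equivalence with (\ref{eqn:parea2}), I would pass to the right triangle cut off by the altitude $H_b$ dropped onto the side of length $\frac{1}{2}L_b$ from the opposite vertex. That vertex is where the two sides of lengths $\frac{1}{2}L_a$ and $\frac{1}{2}L_b$ meet, so the hypotenuse of the right triangle has length $\frac{1}{2}L_a$ and its angle there equals $\theta$; the standard right-triangle relation, that the hyperbolic sine of a leg is the hyperbolic sine of the hypotenuse times the sine of the opposite angle, gives $\sinh H_b = \sinh\frac{L_a}{2}\sin\theta$. Multiplying by $\sinh\frac{L_b}{2}$ turns (\ref{eqn:parea1}) into (\ref{eqn:parea2}) and conversely. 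The computation is elementary throughout; the only points requiring care are the correct pairing of $\theta$ with its opposite side in the law of cosines and the recognition that the unwieldy quantity $(x^2-4)(y^2-4)-(xy-2z)^2$ reduces to a multiple of $4-\mu$ — which is precisely what identity (\ref{eqn:(xy-z)^2}) supplies, so no genuine obstacle remains once that substitution is spotted.
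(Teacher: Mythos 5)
Your proof is correct and follows essentially the same route as the paper: the hyperbolic law of cosines for the Weierstrass triangle, simplification via the relation $x^2+y^2+z^2-xyz=\mu$ (your detour through $\cos\theta$ and identity (\ref{eqn:(xy-z)^2}) is algebraically the very computation the paper performs under its square root, since $(x^2-4)(y^2-4)-(xy-2z)^2=4(4-\mu)=16\cosh^2\frac{L_\partial}{4}$), followed by the right-triangle relation $\sinh H_b=\sinh\frac{L_a}{2}\sin\theta$ for the equivalence of the two identities. One minor slip in wording: the vertex from which the altitude onto the side of length $\frac{1}{2}L_b$ is dropped is the one where the sides of lengths $\frac{1}{2}L_a$ and $\frac{1}{2}L_{ab}$ meet, not where $\frac{1}{2}L_a$ and $\frac{1}{2}L_b$ meet; however, the right triangle you actually use, with hypotenuse $\frac{1}{2}L_a$ and angle $\theta$ opposite the leg $H_b$, is the correct one, so the formula and the argument are unaffected.
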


\begin{proof}
By the Laws of Cosines for the Weierstrass triangle, we have
\begin{equation}
\cosh\frac{L_{ab}}{2}=\cosh\frac{L_a}{2}\cosh\frac{L_b}{2}-\sinh\frac{L_a}{2}\sinh\frac{L_b}{2}\cos\theta.
\end{equation}
The desired identity (\ref{eqn:parea1}) follows by direct calculations as follows:
\begin{eqnarray*}
& & \hspace{-12pt} \sinh\frac{L_a}{2}\sinh\frac{L_b}{2}\sin\theta \\
&\! =\! & \sqrt{\left(\sinh\frac{L_a}{2} \sinh\frac{L_b}{2}\right)^2
-\left(\cosh\frac{L_a}{2}\cosh\frac{L_b}{2}-\cosh\frac{L_{ab}}{2}\right)^2 } \\
&\! =\! & \sqrt{1+2\cosh\frac{L_a}{2}\cosh\frac{L_a}{2}\cosh\frac{L_{ab}}{2}
-\cosh^2\frac{L_a}{2}-\cosh^2\frac{L_b}{2}-\cosh^2\frac{L_{ab}}{2}} \\
&\! =\! & \sqrt{1-\frac{\mu}{4}} \\
&\! =\! & \cosh\frac{L_{\partial}}{4}.
\end{eqnarray*}
By the Law of Sines for a right-angled triangle, 
\begin{equation}
\sinh H_b=\sinh\frac{L_a}{2}\sin\theta.
\end{equation}
Hence the constraint (\ref{eqn:parea1}) can be replaced by (\ref{eqn:parea2}).
This proves Proposition \ref{propn:sineplus}.
\end{proof}

%%%%%%%%%%%%%%%%%%%%%%%%%%%%%%%%%%%%%%%%%%%%%%%%%%%%%%%%%%%%%%
%%%%%%%%%%%%%%%%%%%%%%%%%%%%%%%%%%%%%%%%%%%%%%%%%%%%%%%%%%%%%%

\section{Length minima of closed curves $a^2b^n$ on one-holed tori}\label{s:minima}

In this section we solve the problem of minimizing the geodesic length of the filling closed curve $a^2b^n$ ($n\ge 3$) in the relative Teichm\"uller space ${\rm Teich}_{1,1}(L_{\partial})$.

\begin{theorem}\label{thm:a2bn} 
Let $a,b$ be simple closed curves on a one-holed torus $S_{1,1}$ which intersect once transversely. 
Given integer $n \ge 3$ and length $L_{\partial}\ge 0$ of the neck geodesic of  $S_{1,1}$,  the geodesic length function $L_{a^2b^n}$ has exactly one minimum point $(L^*_{a}, L^*_{b}, L^*_{ab})\in {\rm Teich}^{\rm length}_{1,1}(L_{\partial})$, where $L^*_{b}$ is the unique positive solution to the equation
\begin{equation}\label{eqn:L^*b}
\frac{n}{2} \tanh\frac{nL^*_b}{4} \tanh\frac{L^*_b}{2}=1
\end{equation}
(hence $L^*_b$ is independent of $L_{\partial}$), and the minimum value $L^{\rm min}_{a^2b^n}$ is determined by
\begin{equation}\label{eqn:Lmin}
\sinh\frac{L^{\rm min}_{a^2b^n}}{4} =  \frac{n}{2}\cosh\frac{L_{\partial}}{4}\sinh\frac{nL^*_b}{4} \Big/ \cosh\frac{L^*_b}{2}. 
\end{equation}
\end{theorem}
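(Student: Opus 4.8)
The plan is to minimize the trace $\tr(X^2Y^n)$, or equivalently the length $L_{a^2b^n}$, over the relative Teichm\"uller space using the explicit formulas from Proposition \ref{prop:trace-of-a2bn}. Since $L_{a^2b^n}$ is an increasing function of $\tr(X^2Y^n)$, it suffices to minimize the trace. The constraint $x^2+y^2+z^2-xyz=\mu$ with $\mu=2-2\cosh\frac{L_\partial}{4}$ fixed means we are optimizing over a two-dimensional surface. My first step would be to observe that the formulas (\ref{eqn:a2b2m}) and (\ref{eqn:a2b2m+1}) each split the trace into two pieces: a term depending on the $x_m$'s through a squared difference, and a term depending only on $y$ and $\mu$. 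Because $4-\mu=2\cosh\frac{L_\partial}{4}+2>0$ and the polynomials $q_n$ have positive coefficients (Proposition \ref{prop:P-P}), the second bracketed term is manifestly positive and depends only on $y$.

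\emph{First I would reduce the optimization in the $x_m$-direction.} For fixed $y$ (equivalently fixed $L_b$), the squared-difference term $(x_{m-1}-x_{m+1})^2$ or $(x_m-x_{m+1})^2$ should be driven to its minimum. Using the recursion $x_{m-1}+x_{m+1}=x_m y$ together with the constraint (\ref{eqn:xmyxm+1}) relating $x_m, x_{m+1}, y, \mu$, one sees that along the constraint surface with $y$ fixed, the quantity $x_m - x_{m+1}$ (and similarly $x_{m-1}-x_{m+1}$) ranges over an interval, and the natural minimizer is the symmetric point where this difference vanishes or is as small as possible. This is precisely where the \emph{symmetry of minimum points} advertised after Proposition \ref{prop:trace-of-a2bn} enters: the symmetry forces $x_m = x_{m+1}$ (for odd $n=2m+1$) or the analogous balancing condition, eliminating the first term entirely and leaving a one-variable minimization in $y$ alone.

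\emph{Next I would carry out the one-variable minimization.} After imposing the symmetry condition, $\tr(X^2Y^n)$ becomes an explicit function of $y$ (through $L_b$) and the fixed parameter $L_\partial$. I would differentiate with respect to $L_b$ and set the derivative to zero. The positivity of the $q_n$-coefficients guarantees convexity or at least unimodality, so the critical point is the unique minimum. The translation of the critical-point equation into the stated form (\ref{eqn:L^*b}), namely $\frac{n}{2}\tanh\frac{nL^*_b}{4}\tanh\frac{L^*_b}{2}=1$, should follow by rewriting traces in terms of hyperbolic functions via $y=2\cosh\frac{L_b}{2}$ and using the closed form of $q_n$ (which, having a linear-recursion structure with characteristic behaviour governed by $\cosh$, produces $\sinh\frac{nL_b}{4}$ and $\cosh\frac{L_b}{4}$ factors). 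That $L^*_b$ is independent of $L_\partial$ reflects that the $\mu$-dependence factors out cleanly. Finally, substituting $L^*_b$ back and using the geometric identity (\ref{eqn:parea1}) from Proposition \ref{propn:sineplus} to convert the minimized trace into a length should yield formula (\ref{eqn:Lmin}).

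\emph{The hard part will be} rigorously justifying the symmetry reduction---that is, proving that the minimum over the $x_m$-direction is genuinely attained at the balanced configuration and that no boundary or asymptotic behaviour of the Teichm\"uller space produces a smaller value. One must verify that the squared-difference terms are minimized simultaneously with a favorable (not adversarial) value of the $y$-dependent term, and that the critical point found in the interior is a true global minimum rather than a saddle. Establishing uniqueness of the positive solution to (\ref{eqn:L^*b}) is comparatively routine, since the left-hand side is monotone increasing from $0$ to $+\infty$ in $L^*_b$; but confirming that the two-stage minimization (first in $x_m$, then in $y$) actually locates the global minimum of the joint function---rather than merely a critical point of a restricted slice---is the delicate point and will likely require the explicit convexity afforded by the positive-coefficient structure of $q_n$.
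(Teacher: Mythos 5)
Your first stage---using the decomposition of $\tr(X^2Y^n)$ in Proposition \ref{prop:trace-of-a2bn} into a non-negative squared-difference term plus a term depending only on $y$ and $\mu$, and concluding that any minimum must sit on the symmetric slice $x_m=x_{m+1}$ (resp.\ $x_{m-1}=x_{m+1}$)---is exactly the paper's opening move, and your observation that the restricted function $2+(4-\mu)\bigl[(y-2)^{-1}+q_{2m+1}(y-2)\bigr]$ is a sum of convex functions of $y>2$ blowing up at both ends is a legitimate (and arguably cleaner) way to get existence and uniqueness of the minimum on that slice; the paper instead checks that the critical-point equation has a unique root by monotonicity. One small correction: $\mu=2-2\cosh\frac{L_\partial}{2}$, so $4-\mu=4\cosh^2\frac{L_\partial}{4}$, not $2\cosh\frac{L_\partial}{4}+2$; this does not affect positivity.

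The genuine gap is in your second stage: the theorem's actual content is the two explicit formulas (\ref{eqn:L^*b}) and (\ref{eqn:Lmin}), and your proposal never produces them---it only asserts that a ``closed form of $q_n$'' would yield the $\sinh\frac{nL_b}{4}$ and $\cosh\frac{L_b}{2}$ factors. That closed form (essentially $q_{2m+1}(y-2)=\cosh^2\frac{nL_b}{4}/\sinh^2\frac{L_b}{2}-\frac{1}{4}\sinh^{-2}\frac{L_b}{4}$ under $y=2\cosh\frac{L_b}{2}$) is true but is itself a nontrivial induction that you would have to carry out, and without it neither the critical-point equation nor the minimum value can be extracted. The paper sidesteps this entirely by a geometric device you do not mention: on the symmetric slice the axes of $Y$ and $Z=XY^{n/2}$ meet perpendicularly, so one may take $Y$ diagonal and $Z$ the symmetric matrix in (\ref{eqn:YandZ}); a two-line matrix computation then gives $\sinh\frac{L_{a^2b^n}}{4}=\sinh H_b\cosh\frac{nL_b}{4}$, and Proposition \ref{propn:sineplus} converts $\sinh H_b$ into $\cosh\frac{L_\partial}{4}/\sinh\frac{L_b}{2}$, producing the one-variable function whose critical point is (\ref{eqn:L^*b}). (You cite Proposition \ref{propn:sineplus} but in the form (\ref{eqn:parea1}); it is the altitude form (\ref{eqn:parea2}) that is used.) Your flagged ``hard part''---that the balanced configuration is attainable in every fixed-$y$ slice of the constraint surface, so that the two-stage minimization locates the global minimum---is a fair concern, but note the paper treats it equally tersely; the substantive missing work in your proposal is the computation, not that justification.
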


\begin{proof}
Given a complete hyperbolic structure on $S_{1,1}$, we may choose a lifted holonomy representation 
$\rho: \pi_1(S_{1,1}) \rightarrow {\rm SL}(2,\mathbb R)$ so that
${\rm tr}\,\rho(a)>2$, ${\rm tr}\,\rho(b)>2$ and ${\rm tr}\,\rho(ab)>2$. 

We write $X=\rho(a)$ and $Y=\rho(b)$ so that the calculations done in \S \ref{s:calculation} can used directly with the notations therein.

By formulas (\ref{eqn:a2b2m}) and (\ref{eqn:a2b2m+1}) for ${\rm tr}(X^2Y^n)$ in \S \ref{s:calculation}, if $(L_{a},L_{b},L_{ab}) \in {\rm Teich}^{\rm length}_{1,1}(L_{\partial})$ is the minimum point of the length function $L_{a^2b^{n}}$, $n\ge 3$, there must hold that
${\rm tr}(XY^{m})={\rm tr}(XY^{m+1})$ if $n=2m+1$, or ${\rm tr}(XY^{m-1})={\rm tr}(XY^{m+1})$ if $n=2m$.

Thus we may assume $L_{ab^{m}}=L_{ab^{m+1}}$ if $n=2m+1$, and $L_{ab^{m-1}}=L_{ab^{m+1}}$ if $n=2m$, and proceed to minimizing $L_{a^2b^n}$.

First, we find a formula, (\ref{eqn:lengtha2bn-}) below, for $L_{a^2b^n}$.
We set
\begin{equation} 
Z:=XY^{\frac{n}{2}}.
\end{equation}
Then $X=ZY^{-\frac{n}{2}}$ and 
\begin{equation}\label{eqn:ZYZY}
X^2Y^n=ZY^{-\frac{n}{2}}ZY^{\frac{n}{2}}.
\end{equation}

It follows from the symmetry $L_{ab^{m}}=L_{ab^{m+1}}$ (if $n=2m+1$) or $L_{ab^{m-1}}=L_{ab^{m+1}}$ (if $n=2m$) that 
the axes of the fractional linear transformations $Y$ and $Z$ intersect perpendicularly.
Thus we may assume that
\begin{equation}\label{eqn:YandZ}
Y=\begin{pmatrix}e^{\frac{L_b}{2}} & 0 \\ 0 & e^{-\frac{L_b}{2}}\end{pmatrix}, \quad
Z=\begin{pmatrix}\cosh H_b & \sinh H_b \\ \sinh H_b & \cosh H_b\end{pmatrix},
\end{equation}
where $H_b$ is the length of the altitude perpendicular to the side of length $\frac12 L_b$ in the Weierstrass triangle associated to the triple $a, b, ab$.
By Proposition \ref{propn:sineplus}, we have
\begin{equation}\label{eqn:boundarycondition}
\sinh H_b \sinh\frac{L_b}{2}=\cosh\frac{L_{\partial}}{4}.
\end{equation}
Using (\ref{eqn:ZYZY}) and (\ref{eqn:YandZ}), a direct calculation gives 
\begin{equation} 
X^2Y^{n}=\begin{pmatrix} \cosh^2 H_b + (\sinh^2 H_b) e^{\frac{nL_b}{2}} & (\cosh H_b \sinh H_b) (e^{-\frac{nL_b}{2}}+1) \\ 
(\cosh H_b \sinh H_b) (1+e^{\frac{nL_b}{2}}) & (\sinh^2 H_b)e^{-\frac{nL_b}{2}} + \cosh^2 H_b \end{pmatrix}.
\end{equation}
Taking traces gives
\begin{equation}\label{eqn:tracea2bn}
2\cosh\frac{L_{a^2b^n}}{2}=2\cosh^2 H_b +2\sinh^2 H_b \cosh\frac{nL_b}{2}.
\end{equation}
Consequently, 
\begin{equation}\label{eqn:lengtha2bn}
\sinh\frac{L_{a^2b^n}}{4}=\sinh H_b \cosh\frac{nL_b}{4}.
\end{equation}
Combining with (\ref{eqn:boundarycondition}), we have
\begin{equation}\label{eqn:lengtha2bn-}
\sinh\frac{L_{a^2b^n}}{4}=\cosh\frac{L_{\partial}}{4}\cosh\frac{nL_b}{4}\Big / \sinh\frac{L_b}{2}.
\end{equation}

Next, we find the minimum point for $L_{a^2b^n}$.
Differentiating the two sides of (\ref{eqn:lengtha2bn-}) with respect to $L_b$, we see that 
\begin{equation}
\frac{{\rm d}L_{a^2b^n}}{{\rm d}L_b}=0 \Longleftrightarrow 
\frac{n}{4}\sinh\frac{nL_b}{4} \sinh\frac{L_b}{2} - \frac{1}{2}\cosh\frac{nL_b}{4}  \cosh\frac{L_b}{2} =0.
\end{equation}
Therefore at the minimum point $(L^*_{a}, L^*_{b}, L^*_{ab})$ for $L_{a^2b^n}$, the value of $L^*_b$ is determined by
\begin{equation}\label{eqn:Lb}
\frac{n}{2} \tanh\frac{L^*_b}{2} \tanh\frac{nL^*_b}{4} =1,
\end{equation}
which is equality (\ref{eqn:L^*b}) as desired.

It is easy to know that, the solution $L^*_b>0$ to equation (\ref{eqn:Lb}) exists and is unique, 
since the left side of (\ref{eqn:Lb}) is a strictly increasing function in $L^*_b \in (0,+\infty)$ which
has limit $0$ as $L^*_b \rightarrow 0$ and limit $n/2 >1$ as $L^*_b \rightarrow +\infty$.

We conclude that, given $n \ge 3$ and $L_{\partial}\ge 0$, the minimum $L^{\rm min}_{a^2b^n}$ of $L_{a^2b^n}$ is given by
\begin{equation}\label{eqn:minLa2bn-star}
\sinh\frac{L^{\rm min}_{a^2b^n}}{4}=\cosh\frac{L_{\partial}}{4}\cosh\frac{nL^*_b}{4}\Big / \sinh\frac{L^*_b}{2},
\end{equation}
where $L^*_b>0$ is determined by equation (\ref{eqn:Lb}). It follows from (\ref{eqn:Lb}) that
\begin{equation}
\cosh\frac{nL^*_b}{4}\Big / \sinh\frac{L^*_b}{2}=\sinh\frac{nL^*_b}{4} \Big/ \cosh\frac{L^*_b}{2}.
\end{equation}
Substituting this into (\ref{eqn:minLa2bn-star}) gives 
\begin{equation}\label{eqn:Lmin-2}
\sinh\frac{L^{\rm min}_{a^2b^n}}{4} =  \frac{n}{2}\cosh\frac{L_{\partial}}{4}\sinh\frac{nL^*_b}{4} \Big/ \cosh\frac{L^*_b}{2},
\end{equation}
which is equality (\ref{eqn:Lmin}) as desired. This finishes the proof of Theorem \ref{thm:a2bn}.
\end{proof}

The solution to (\ref{eqn:Lb}) defines a function 
\begin{equation}
L^*_b=L^*_b(L_{\partial}, n), 
\end{equation}
and consequently a function
\begin{equation}
L^{\rm min}_{a^2b^n}=L^{\rm min}_{a^2b^n}(L_{\partial}, n). 
\end{equation}

In the following addendum we obtain monotonicity and asymptotic properties of $L^*_b$ and $L^{\rm min}_{a^2b^n}$ 
as one of $n$ and $L_{\partial}$ tends to infinity while the other remains fixed.

\begin{theorem}\label{rmk:a2bn} We have the following addenda to Theorem \ref{thm:a2bn}.

(a) For $L_{\partial}\ge 0$ fixed, both $L^*_b$ and $(nL^*_b)/4$, where $n \ge 3$ are strictly decreasing in $n$. 
Furthermore, for $L_{\partial}\ge 0$ fixed and $n \rightarrow +\infty$, there hold 
\begin{equation} 
L^*_b \rightarrow 0,
\end{equation} 
\begin{equation} 
\frac{nL^*_b}{4} \rightarrow t^*, 
\end{equation}
\begin{equation} \label{eqn:Lmin-sympt-in-n}
L^{\rm min}_{a^2b^n} \sim 4 \log n,
\end{equation}
where  $t^*=1.199678640...$ is the unique positive solution to the equation 
\begin{equation} \label{eqn:t^*}
t^* \tanh(t^*)=1.
\end{equation}

(b) As $n\ge 3$ fixed, $L^{\rm min}_{a^2b^n}$ is strictly increasing in $L_{\partial} \in [0,+\infty)$. Furthermore, as $n\ge 3$ fixed and
$L_{\partial} \rightarrow +\infty$, there hold 
\begin{equation}
L^{\rm min}_{a^2b^n} \rightarrow +\infty,
\end{equation}
\begin{equation} \label{eqn:Lmin-sympt-in-Lpartial}
L^{\rm min}_{a^2b^n} \sim  L_{\partial}. 
\end{equation}
\end{theorem}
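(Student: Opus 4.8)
The plan is to extract everything from the two explicit relations provided by Theorem~\ref{thm:a2bn}: the defining equation (\ref{eqn:L^*b}) for $L^*_b$ and the closed formula (\ref{eqn:Lmin}) (equivalently (\ref{eqn:minLa2bn-star})) for $L^{\rm min}_{a^2b^n}$. The single most useful manoeuvre is the change of variable $t=nL^*_b/4$: dividing (\ref{eqn:L^*b}) through by $n/2$ and using $n=4t/L^*_b$ rewrites it as
\begin{equation*}
t\tanh t=\frac{L^*_b/2}{\tanh(L^*_b/2)}.
\end{equation*}
Here $t\mapsto t\tanh t$ is strictly increasing from $0$ to $+\infty$ on $(0,+\infty)$, so the equation $t\tanh t=c$ defines $t$ as a strictly increasing function of $c>0$; and the elementary function $x\mapsto x/\tanh x=x\coth x$ is strictly increasing on $(0,+\infty)$, since after clearing $\sinh^2 x$ the sign of its derivative is that of $\tfrac12\sinh 2x-x>0$. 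These two monotonicities, together with the limit $x/\tanh x\to 1$ as $x\to 0^+$, are the engine behind all of part~(a).

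For part~(a) I fix $L_\partial$ and regard $n$ as a real parameter. The left-hand side $f(s,n):=\tfrac{n}{2}\tanh\tfrac{ns}{4}\tanh\tfrac{s}{2}$ of (\ref{eqn:L^*b}) is strictly increasing in $s>0$ (as already used in the proof of Theorem~\ref{thm:a2bn}) and visibly strictly increasing in $n>0$; by the implicit function theorem the solution $L^*_b(n)$ of $f=1$ satisfies $dL^*_b/dn<0$, so $L^*_b$ is strictly decreasing in $n$, and restriction to integers gives the stated claim. Since $L^*_b$ decreases and $x/\tanh x$ increases, the right-hand side of the displayed identity decreases in $n$, whence $t\tanh t$ decreases and therefore $t=nL^*_b/4$ decreases. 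For the limits: $L^*_b$ is monotone and bounded below, hence convergent; a positive limit would force $nL^*_b/4\to+\infty$ and blow up the left side of (\ref{eqn:L^*b}), a contradiction, so $L^*_b\to 0$; then the right-hand side of the displayed identity tends to $1$, giving $t\tanh t\to 1$ and $t\to t^*$ with $t^*\tanh t^*=1$, which is (\ref{eqn:t^*}). Substituting $nL^*_b/4\to t^*$ and $L^*_b\to 0$ into (\ref{eqn:Lmin}) gives $\sinh\tfrac{L^{\rm min}_{a^2b^n}}{4}\sim\tfrac{n}{2}\cosh\tfrac{L_\partial}{4}\sinh t^*$, a positive constant times $n$; inverting the asymptotic $\sinh w\sim\tfrac12 e^{w}$ then yields $L^{\rm min}_{a^2b^n}/4=\log n+O(1)\sim\log n$, which is (\ref{eqn:Lmin-sympt-in-n}).

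Part~(b) is shorter, the key observation being that $L^*_b$ is independent of $L_\partial$ (equation (\ref{eqn:L^*b}) does not involve $L_\partial$). Consequently (\ref{eqn:Lmin}) takes the form $\sinh\tfrac{L^{\rm min}_{a^2b^n}}{4}=C(n)\cosh\tfrac{L_\partial}{4}$, where $C(n)=\tfrac{n}{2}\sinh(nL^*_b/4)/\cosh(L^*_b/2)>0$ does not depend on $L_\partial$. Strict monotonicity of $L^{\rm min}_{a^2b^n}$ in $L_\partial\in[0,+\infty)$ and the divergence $L^{\rm min}_{a^2b^n}\to+\infty$ are then immediate from the strict monotonicity of $\cosh$ on $[0,+\infty)$ and of $\sinh$. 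For (\ref{eqn:Lmin-sympt-in-Lpartial}) I again invert the exponential asymptotics: $\tfrac12 e^{L^{\rm min}_{a^2b^n}/4}\sim C(n)\tfrac12 e^{L_\partial/4}$ gives $L^{\rm min}_{a^2b^n}/4=L_\partial/4+\log C(n)+o(1)$, and since $\log C(n)$ is a constant while $L_\partial\to+\infty$, this is exactly $L^{\rm min}_{a^2b^n}\sim L_\partial$.

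The only genuinely delicate point is the monotonicity of $t=nL^*_b/4$ in $n$ in part~(a); everything else is bookkeeping with monotone elementary functions and standard $\sinh/\cosh$ asymptotics. I expect the main obstacle to be organising the change of variable $t=nL^*_b/4$ so that the already-established decrease of $L^*_b$ transfers to a strict decrease of $t$ (rather than merely to a bound), which is precisely where the strict monotonicity of $x\coth x$, and hence the inequality $\sinh 2x>2x$, is needed.
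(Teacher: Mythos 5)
Your proposal is correct and follows essentially the same route as the paper: the same rewriting of the defining equation as $t\tanh t = \tfrac{L^*_b}{2}\coth\tfrac{L^*_b}{2}$ with $t=nL^*_b/4$, the same monotonicity and limiting arguments, and the same exponential asymptotics for both (\ref{eqn:Lmin-sympt-in-n}) and (\ref{eqn:Lmin-sympt-in-Lpartial}). Your explicit justification of the strict decrease of $nL^*_b/4$ via the monotonicity of $x\coth x$ (i.e.\ $\sinh 2x>2x$) supplies a detail the paper leaves implicit.
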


\begin{proof}
For (a), let $L_{\partial}\ge 0$ be fixed. 
Note that the left side of (\ref{eqn:Lb}) is an expression which is strictly increasing in $n$ and in $L^*_b$.
It follows that $L^*_b$ is strictly decreasing in $n$. For the limit as $n\rightarrow +\infty$, there must hold
\begin{equation}
\lim_{n\rightarrow +\infty}L^*_b=0,
\end{equation}
otherwise there would be a contradiction by taking limit in equality (\ref{eqn:Lb}).

Rewriting equality (\ref{eqn:Lb}) as 
\begin{equation}\label{eqn:Lb-rewritten}
\frac{nL^*_b}{4} \tanh\frac{nL^*_b}{4} =\frac{L^*_b}{2} \left( \tanh\frac{L^*_b}{2} \right)^{-1}, 
\end{equation}
one concludes that $\frac{nL^*_b}{4}$ is strictly decreasing in $n$.  

Let us write
\begin{equation}
t^*=\lim_{n\rightarrow +\infty} \frac{nL^*_b}{4}.
\end{equation}
Then $t^*>0$. Taking limits in (\ref{eqn:Lb-rewritten}) as $n \rightarrow +\infty$ gives 
\begin{equation*}
t^*\tanh t^* =1,
\end{equation*}
which is (\ref{eqn:t^*}) as desired. 
Numerically solving this equation using Maple$^\circledR$ gives 
\begin{equation}
t^*=1.199678640257733833916369848641141944261458788418607...
\end{equation}

It is easy to derive from (\ref{eqn:Lmin}) that, as $L_{\partial} \ge 0$ fixed and $n\rightarrow +\infty$, 
\begin{equation*}
\exp \Big( \frac{1}{4}L^{\rm min}_{a^2b^n} \Big)  \sim n \cosh\frac{L_{\partial}}{4}\sinh t^*,
\end{equation*}
and hence
\begin{equation*}
L^{\rm min}_{a^2b^n}  \sim 4\log n, 
\end{equation*}
which is (\ref{eqn:Lmin-sympt-in-n}) as desired.

For (b), let $n \ge 3$ be fixed. Then $L^*_b$ is fixed since it depends on $n$ only. 
Thus it is easy to see from (\ref{eqn:Lmin}) that $L^{\rm min}_{a^2b^n}$ is strictly increasing in $L_{\partial} \in [0,+\infty)$ and
\begin{equation*}
\lim_{L_{\partial} \rightarrow+\infty} L^{\rm min}_{a^2b^n}  = +\infty.
\end{equation*}
Furthermore, one has, as $n \ge 3$ fixed and $L_{\partial} \rightarrow+\infty$, 
\begin{equation*}
\exp \Big( \frac{1}{4}L^{\rm min}_{a^2b^n} \Big)  \sim 
\exp \Big(\frac{L_{\partial}}{4} \Big)\,\frac{n}{2} \sinh \frac{nL^*_b}{4} \Big/ \cosh \frac{L^*_b}{2},
\end{equation*}
and hence
\begin{equation*}
L^{\rm min}_{a^2b^n}  \sim L_{\partial},
\end{equation*}
which is (\ref{eqn:Lmin-sympt-in-Lpartial}) as desired. This finishes the proof of Theorem \ref{rmk:a2bn}.
\end{proof}

%%%%%%%%%%%%%%%%%%%%%%%%%%%%%%%%%%%%%%%%%%%%%%%%%%%%%%%%%%%%%%
%%%%%%%%%%%%%%%%%%%%%%%%%%%%%%%%%%%%%%%%%%%%%%%%%%%%%%%%%%%%%%

\vskip 12pt

{\bf Acknowledgements.} 
Part of the work contained in this paper was done during the authors' visit in the summer of 2021 to the Institute for Advanced Study in Mathematics (IASM) at Zhejiang University, and they thank IASM for its hospitality, and Professor Shicheng Wang for his constant interests and helpful discussions. They would like to thank the anonymous referee for his/her helpful comments that improved the quality of the manuscript. The second author is supported by NSFC grant No. 12171345.

%%%%%%%%%%%%%%%%%%%%%%%%%%%%%%%%%%%%%%%%%%%%%%%%%%%%%%%%%%%%%%
%%%%%%%%%%%%%%%%%%%%%%%%%%%%%%%%%%%%%%%%%%%%%%%%%%%%%%%%%%%%%%

\vskip 24pt

\end{document}